%November 09, 2009
\documentclass[12pt,oneside]{amsart}
\usepackage{amscd,amsmath,amssymb,amsfonts,a4}
\usepackage{hyperref}
%\usepackage[cmtip, all]{xy}
%\usepackage{pstricks}
%\usepackage{color}

% color text

%%%%%%%%%

% text streichen 
\newlength{\rulebreite}

%%%%%%%%%%%%

\def\timesover#1#2#3{\ \xymatrix@1@=0pt@M=0pt{ _{#1}&\times&_{#2} \\& ^{#3}&}\ }
\def\otimesover#1#2#3{\ \xymatrix@1@=0pt@M=0pt{ _{#1}&\otimes&_{#2} \\& ^{#3}&}\ }

\usepackage[all]{xy}
\theoremstyle{plain}
\newtheorem{thm}{Theorem}
\newtheorem{lem}[thm]{Lemma}

\theoremstyle{definition}
\newtheorem{defn}[thm]{Definition}

\newtheorem{rmk}[thm]{Remark}

\numberwithin{thm}{section}
\numberwithin{equation}{section}

\marginparwidth5ex

\newcommand{\surj}{\twoheadrightarrow}

%\newcommand{}{{\rm red}}

% Skriptbuchstaben

\newcommand{\sC}{{\mathcal C}}

\newcommand{\sO}{{\mathcal O}}

% Sonderbuchstaben mit Doppellinie

\newcommand{\C}{{\mathbb C}}

\newcommand{\F}{{\mathbb F}}

\newcommand{\N}{{\mathbb N}}
\renewcommand{\P}{{\mathbb P}}
\newcommand{\Q}{{\mathbb Q}}

\newcommand{\Z}{{\mathbb Z}}

\begin{document}

\title[Weak Density ]{Weak Density of the Fundamental Group Scheme}
\author{H\'el\`ene Esnault}
\address{
Universit\"at Duisburg-Essen, Mathematik, 45117 Essen, Germany}
\email{esnault@uni-due.de}
\author{Vikram B. Mehta}
\address{Mathematics, Tata Institute, Homi Bhabha Road, Mumbai 400005, India}
\email{vikram@math.tifr.res.in}
\date{November 09, 2009}
\subjclass{13D40}
\thanks{The first author is supported by  the DFG Leibniz Preis, the SFB/TR45 and the ERC Advanced Grant 226257}
\begin{abstract}
 Let $X$ be a non-singular projective variety over an algebraically 
closed field $k$ of characteristic $0$. If $\pi_1^{\rm et}(X) = 0$, then for 
any 
ample line bundle $H$ on $X$, any semistable bundle $E$ on $X$ with all   
Chern classes $0$ is \emph{trivial}. Over ${\mathbb C}$, this a 
consequence of the fact that $\pi_1^{\rm top}(X_{\rm an})$  is a finitely generated 
group and hence any representation into a linear group is residually 
finite. 
We prove  an analog of this theorem in characteristic $p>0$. Semistable bundles with vanishing Chern classes are replaced by  Nori semistable bundles, that is those which are semistable of degree $0$ on any curve mapping to $X$.
The \'etale fundamental group is replaced by Nori's
fundamental group scheme $\pi^{N}(X)$ (\cite{N}), which is the profinite completion of the tensor automorphism group scheme $\pi^S(X)$ of Nori semistable bundles (\cite{L4}) studied by Langer (\cite{L3} \cite{L4}).

\end{abstract}
\maketitle
\section{Introduction}
 Let $X$ be a non-singular projective variety over  the field $k=\C$
of  complex numbers. Let $H$ be a very ample line bundle on $X$. Assume
that  $E$ is a stable vector bundle on $X$, with respect to $H$, with all the
Chern classes $0=c_i(E) \in H^{2i}(X, \Q(i)), 1 \leq i \leq d = $ dimension $X$. Then it is 
 classical \cite{NS}  that $E$ carries an integrable connection with unitary underlying monodromy.
If $E$ is assumed to be merely semistable, then one shows that
there is a Jordan-H\"older filtration $0 \subset E_0\subset \ldots \subset E_n = E$
such that each $E_i /E_{i-1}$ is a stable bundle with all Chern classes
$0$.\\[.1cm]
 Assume now that one is working over $k$, an arbitrary
algebraically closed field of characteristic $0$. If $X$  and $H$ are as above,
and $E$ is a semistable bundle on $X$, with all Chern classes in $\ell$-adic cohomology $H^{2i}(X, \Q_\ell(i))$  trivial, then
again it follows from the Lefschetz principle, that there is a filtration 
$0 \subset E_0\subset \ldots \subset E_n=E$, where each $E_i/E_{i-1}$ is a   
stable bundle with all Chern classes $0$.\\[.1cm]
For a stable  vector bundle $E$ on $X$ with all Chern classes $0$ over $k=\C$, let 
$\rho : \pi^{\rm top}(X_{\rm an}) \to U(r)$ be the associated irreducible 
unitary representation.   Then the image $G$ of $\rho$ is a 
finitely generated  subgroup of $GL(r, \C)$. By  Malcev theorem \cite{Mal}, $G$ is residually 
finite, that is  $G$ injects into its profinite completion $\widehat{G}$.
 If $G \neq \{1\}$, there is a homomorphism  of $G$ onto a
finite group, thus a homomorphism from $\pi^{\rm top}(X)$ onto a finite group. 
If one now assumes that $\pi^{\rm et}(X) = 0$, this implies that $G=\{1\}$, thus $E$ is trivial. 
If $E$ is only semistable, then the above argument implies that the associated graded  bundle $\oplus_i (E_i/E_{i-1})$ is trivial. On the other hand,  $\pi^{\rm et}(X) = 0$ implies that $H^1_{\rm et}(X, \Z/n)={\rm Hom}(\pi_1^{\rm et}(X), \Z/n)=0$, thus $\ell$-adic cohomology $H^1_{\rm et}(X, \Z_\ell)$
is trivial, thus, by the comparison theorem, Betti cohomology $H^1(X_{\rm an}, \Z)$ is trivial, thus by Hodge theory, $H^1(X,\mathcal{O}_X) = 0$. So $E$, which is a successive extension of $\sO_X$ by itself, is trivial as well.\\[.1cm]
Note  that in characteristic $0$, any semistable $E$ on
a smooth projective variety $X$ with  all Chern classes $0$ has the following property : If 
$C$ is a smooth curve and $f : C \to
X$ is any map, then  $f^*E$ is semistable of degree $0$ on $C$.
If $k = {\mathbb C}$,  then $f^*E$ is thus filtered such that the graded bundle is a sum of bundles, each of which carries a connection with underlying unitary monodromy.
This motivates the following:
\begin{defn}\label{defn1.1}  Let $X$ be a projective variety over
any algebraically closed field. Let $E$ be a vector bundle on $X$. Then
$E$  is \emph{Nori semistable}  if for all smooth
projective curves $C$, all morphisms $f : C \to X$, the bundle
$f^*(E)$ is semistable on $C$, of  degree $0$.
\end{defn} 
\noindent 
This notion has been introduced by Nori \cite[Definition~p.81]{N}.
We have seen above that in characteristic $0$, any semistable $E$ on
a smooth $X$ with  all Chern classes $0$ is Nori semistable. It is indeed true for $k=\C$ and thus true for all $k=\bar k$ of characteristic $0$ by the Lefschetz principle. \\[.1cm]
 In characteristic $p>0$, a bundle can be stable and not Nori semistable, as Frobenius pull backs of 
semistable bundles are no longer semistable, even for curves \cite[Theorem~1]{Gstable}.
This is one reason why one works with Nori semistable bundles. \\[.1cm]
Let ${\sf Ns}(X)$ be the category of  Nori semistable bundles on a smooth projective  connected variety  $X$ defined over an algebraically closed field $k$ of characteristic $p>0$. Nori shows \cite[Lemma~3.6]{N} that this is 
 a $k$-linear, abelian, rigid tensor category.  Fix a $k$-rational point
$x$ in $X$. Let $\omega_x: {\sf Ns}(X) \to {\sf Vec}_k$ be the tensor functor $W \to W|_x$.
Then $({\sf Ns}(X),\omega_x)$  is a Tannaka category, and one defines $\pi^S(X,x):={\rm Aut}^{\otimes}({\sf Ns}(X), \omega_x)$ to be the associated Tannaka group scheme. \\[.1cm] 
This notation $\pi^S(X,x)$ was introduced in \cite{BPS} on curves and \cite{L3} in any dimension. More precisely, 
in 
\cite[1.2]{L4}, Langer mentions that ${\sf Ns}(X)$ is identical to the category of vector bundles $E$ which are {\it numerically trivial}, which means that both the tautological line bundle $\sO(1)$ on $\P(E)$  and the one on $\P(E^\vee)$ are numerically effective. We recall this fact in Lemma \ref{lem2.2}. He also shows \cite[Proposition~5.1]{L3} that ${\sf Ns}(X)$ is the category  of strongly semistable reflexive sheaves with the property that  ${\rm deg}\big({\rm ch}_1(E)\cdot H^{d-1}\big)={\rm deg}\big({\rm ch}_2(E)\cdot H^{d-2}\big)=0$ for a fixed polarization $H$. So in particular, it does not depend on the chosen polarization. \\[.1cm]
\\[.1cm]
Nori \cite[Definition~p.82]{N} constructed the category $\sC^N(X)$ of essentially finite bundles as the full subcategory of ${\sf Ns}(X)$ spanned by {\it finite} bundles, where finiteness is in the sense of Weil: $E$ is finite if there are two polynomials $f\neq g, f,g\in \N[T]$, such that $f(E)\cong g(E)$. Thus by definition, $$\pi^S(X,x)\surj \pi^N(X,x)$$ is the profinite completion homomorphism of the $k$-progroup scheme $\pi^S(X,x)$.\\[.1cm]
We prove:
\begin{thm} \label{thm1.2}
 Let $X$ be a smooth projective connected variety over an 
algebraically closed field of characteristic $p>0$. Let $x\in X(k)$.  If $\pi^{N}(X,x) = \{1\}$, 
then $\pi^S(X,x) = \{1\}$ as well.
\end{thm}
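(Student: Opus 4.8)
The plan is to show that $\pi^S(X,x)=\{1\}$ by proving it is a pro-finite group scheme, so that it coincides with its own profinite completion $\pi^N(X,x)$, which is trivial by hypothesis. Since $\pi^S(X,x)$ is an affine $k$-group scheme, triviality is equivalent to triviality of all its finite-dimensional representations, i.e. every object of $\mathsf{Ns}(X)$ is trivial. By the Tannakian dictionary, it suffices to show that $\pi^S(X,x)$ has no nontrivial \emph{finite} quotient group scheme \emph{and} no nontrivial quotient with unipotent or with non-finite reductive part—so the crux is to rule out the part of $\pi^S$ that gets killed by profinite completion. Concretely, $\pi^N(X,x)=\{1\}$ tells us every essentially finite bundle is trivial; we must bootstrap this to: every Nori semistable bundle is trivial.

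First I would isolate the obstruction: the kernel of $\pi^S(X,x)\twoheadrightarrow\pi^N(X,x)$ is the intersection of all normal subgroup schemes with finite quotient, so if $\pi^N(X,x)=\{1\}$ then $\pi^S(X,x)$ has \emph{no} nontrivial finite quotient group scheme at all. In particular $\pi^S(X,x)$ has no nontrivial finite abelian quotient, so $H^1$ of $X$ with coefficients in $\mu_p$, in $\Z/p$, and in $\alpha_p$ all vanish; the first two give that the abelianized étale and the "multiplicative" parts are trivial, and—this is the key input—vanishing of $H^1(X,\mathcal O_X)$ (forced by the $\alpha_p$ and $\Z/p$ statements via the exact sequences $0\to\mu_p\to\mathbb G_m\xrightarrow{p}\mathbb G_m\to 0$ truncated, $0\to\Z/p\to\mathbb G_a\xrightarrow{F-1}\mathbb G_a\to 0$, and $0\to\alpha_p\to\mathbb G_a\xrightarrow{F}\mathbb G_a\to 0$) shows $\mathrm{Pic}^\tau(X)$ has no infinitesimal part and that there are no nontrivial extensions of $\mathcal O_X$ by $\mathcal O_X$. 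Then I would argue that $\pi^S(X,x)$ is generated, as a group scheme, by its finite quotients: a representation $E\in\mathsf{Ns}(X)$ generates a Tannakian subcategory with Tannaka group a quotient of $\pi^S$, and Langer's structure theory (using that $\mathsf{Ns}(X)$ consists of strongly semistable sheaves with vanishing $\mathrm{ch}_1\cdot H^{d-1}$ and $\mathrm{ch}_2\cdot H^{d-2}$) lets one produce Frobenius descent data; the Frobenius pullback $F^*E$ is again Nori semistable, and iterating Frobenius together with the semistability should force $E$ into the essentially finite subcategory after we know $H^1(X,\mathcal O_X)=0$ kills the unipotent/infinitesimal obstructions.

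The main technical step—and the part I expect to be the real obstacle—is showing that a Nori semistable bundle $E$ with trivial determinant and trivial "finite part" is actually essentially finite, equivalently that the local monodromy group scheme it generates is finite. The danger is a bundle that is strongly semistable, has all the numerical invariants vanishing, yet is not finite in Weil's sense—one must exclude exactly the phenomena that profinite completion forgets, namely the pro-unipotent radical and any infinite "reductive-at-$p$" part such as a $\mathbb G_a$ or an infinite product of $\alpha_p$'s. I would attack this by a dévissage on the Harder--Narasimhan/Jordan--Hölder pieces after enough Frobenius twists, using that $F^{*n}E$ for $n\gg0$ becomes a successive extension of stable Nori semistable bundles, and that the extension groups controlling unipotent parts are computed by $H^1(X,\mathcal Hom(E_i/E_{i-1},E_j/E_{j-1}))$, which via the vanishing of $H^1(X,\mathcal O_X)$ and an induction on rank can be forced to vanish; the stable pieces themselves correspond to simple representations of $\pi^S$, hence—once we know there is no nontrivial finite quotient—must already be trivial, because a simple non-trivial representation of an affine group scheme with no finite quotient would produce, by taking a sufficiently high Frobenius pushforward or by Langer's boundedness, a nontrivial finite quotient, a contradiction. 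Assembling these reductions yields that every object of $\mathsf{Ns}(X)$ is trivial, hence $\pi^S(X,x)=\{1\}$.
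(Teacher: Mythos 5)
Your reductions in the first two paragraphs track the paper's own preliminary steps quite closely: from $\pi^N(X,x)=\{1\}$ one gets $\pi^{\rm et}(X,x)=\{1\}$, $\pi^{\rm loc}(X,x)=\{1\}$ and $H^1(X,\sO_X)=0$ (the paper's Lemma \ref{lem2.4}), triviality of line bundles in ${\sf Ns}(X)$ (Lemma \ref{lem2.5}), and a d\'evissage by induction on the rank through the socle of the Jordan--H\"older filtration of the Frobenius pullbacks $(F^n)^*E$, using that the stable subquotients are again Nori semistable and that $H^1(X,\sO_X)=0$ kills the extension classes (Lemmas \ref{lem2.6}--\ref{lem2.8}). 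Up to that point the proposal is sound and essentially the same as the paper.

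However, the step you yourself flag as ``the real obstacle'' is left unproved, and it is precisely the content of the theorem. After the reduction you are facing a \emph{stable} bundle $E$ with all the $E_n=(F^n)^*E$ stable, and you must show that some $E_n$ is trivial. Your argument for this is: a simple nontrivial representation of $\pi^S$ ``would produce, by taking a sufficiently high Frobenius pushforward or by Langer's boundedness, a nontrivial finite quotient.'' This is not a proof; it is a restatement of the claim that $\pi^S$ is detected by its profinite completion, which is what needs to be shown. Langer's boundedness does \emph{not} imply that the set $\{E_0,E_1,E_2,\dots\}$ of isomorphism classes is finite unless $k=\bar\F_p$ (the paper explicitly notes that over $\bar\F_p$ the theorem is trivial because every Nori semistable bundle is already essentially finite). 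Over a general algebraically closed field the Frobenius orbit of $[E]$ in the moduli space $M$ of stable sheaves can a priori be Zariski dense in a positive-dimensional locus, and ruling this out is the heart of the matter. The paper does it by taking the Zariski closure $N$ of $\{[E_n]\}$ in $M_{\rm red}$, observing that the Verschiebung $V\colon [W]\mapsto[F^*W]$ is dominant on the positive-dimensional components of $N$, spreading everything out over a finitely generated $\F_q$-base, and invoking Hrushovski's theorem on the density of Frobenius-periodic points of dominant rational self-maps to produce closed points $u$ with $V_s^m(u)=F_{X_s}^m(u)=u$; Lange--Stuhler then forces those components to be empty, so the orbit is finite, some $(F^t)^*E_n\cong E_n$, and Lange--Stuhler together with $\pi^{\rm et}=\pi^{\rm loc}=\{1\}$ gives triviality. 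Neither Hrushovski's theorem nor any substitute for it appears in your proposal, so the argument does not close.
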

\noindent 
In characteristic $0$, one has $ \pi^N(X,x)(k) = \pi^{\rm et}(X,x)$, and thus Theorem \ref{thm1.2} is a generalization of the theorem discussed above.   Note 
also that if $k = \bar{\F}_p$, then every $E$ in ${\sf Ns}(X)$ is actually 
essentially finite, thus in this case $\pi^S(X,x) = \pi^N(X,x)$ and the theorem is trivial. \\[.1cm]
The main point of our article  is to prove Theorem
\ref{thm1.2}  over an arbitrary field of characteristic $p>0$.
The proof is a variant of the proof of \cite[Theorem~1.1]{EM}. Apart from  the existence of quasi-projective moduli spaces due to Langer \cite[Theorem~4.1]{L2}, the  main tool  is Hrushovski's fundamental theorem   \cite[Corollary~1.2]{H}.
The  difference with the proof of the main theorem in \cite{EM} relies in the choice of the sublocus of the moduli on which we ultimately wish to apply Hrushovski's theorem. In \cite{EM}, we defined Verschiebung {\it divisible} 
subschemes of $M$ (see \cite[Definition~3.6]{EM}), while here the notion which works is reversed. We do not discuss this in the note, but the locus we defined is rather Verschiebung {\it multiplicative}, that is if one moduli point $[E]$ lies in it, then $[F^*E]$ lies in it as well. 
Another difference is that in \cite{EM},  even over $\bar\F_p$, we had to appeal to 
Hrushovski's theorem. In the present situation,  where we deal with bundles 
going up by Frobenius, over $\bar \F_p$, we can appeal to 
Lange-Stuhler theorem \cite[Satz~1.4]{LS}. Only for an arbitrary algebraically closed field $k$, 
does one use  Hrushovski's theorem.\\[.2cm]
{\it Acknowledgements:} We would like to thank Holger Brenner, Laurent 
Ducrohet and Yves Laszlo for several helpful conversations. The second 
author would like to thank the SRB/TR45 at the University Duisburg-Essen
for hospitality on several occasions during the last years.

\section {Proof of Theorem \ref{thm1.2}}
\noindent  Throughout this section, $X$ is a smooth connected projective variety of dimension $d$ over an algebraically closed field $k$ of characteristic $p>0$, $F: X\to X$ is the absolute Frobenius morphism of $X$, $x\in X(k)$ is a rational point, and $H$ is a fixed ample line bundle on it. We prove  Theorem  \ref{thm1.2} in a series of lemmas.
\begin{lem} \label{lem2.1}
If $\pi^N(X,x) = 0$, then $\pi^{{\rm et}}(X,x) = 0$.
\end{lem}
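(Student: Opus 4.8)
The plan is to translate the vanishing of $\pi^N(X,x)$ into the vanishing of $\pi^{\rm et}(X,x)$ by relating finite étale covers to objects of the Tannaka category $\mathsf{Ns}(X)$. First I recall that, as Nori shows, the étale fundamental group $\pi^{\rm et}(X,x)$ is naturally a quotient of $\pi^N(X,x)$: every finite étale covering $Y\to X$ of degree $n$ gives rise to the rank-$n$ bundle $\pi_*\sO_Y$, which is finite in the sense of Weil (its endomorphism monoid under $\oplus,\otimes$ is finite because the covering, up to taking its Galois closure, is controlled by a finite group), hence essentially finite, hence an object of $\mathsf{Ns}(X)$. In Tannakian terms the full subcategory of $\mathsf{Ns}(X)$ generated by these bundles is the category of representations of the constant (pro)finite group scheme attached to $\pi^{\rm et}(X,x)$, so there is a faithfully flat homomorphism $\pi^N(X,x)\surj \underline{\pi^{\rm et}(X,x)}$. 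This is exactly the content of Nori's comparison between $\pi^N$ and $\pi^{\rm et}$, and I would simply cite \cite{N} for it.

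Granting that surjection, the hypothesis $\pi^N(X,x)=\{1\}$ forces the quotient $\underline{\pi^{\rm et}(X,x)}$ to be trivial, hence $\pi^{\rm et}(X,x)=\{1\}$, which is what Lemma \ref{lem2.1} asserts (writing the group-scheme statement as the group statement). Concretely, unwinding the Tannakian formalism: if $\pi^{\rm et}(X,x)\neq 0$ there is a nontrivial finite quotient, hence a connected nontrivial finite étale cover $Y\to X$; then $\pi_*\sO_Y$ is a nontrivial object of $\mathsf{Ns}(X)$ whose Tannaka monodromy group is the corresponding finite group, contradicting $\pi^N(X,x)=\{1\}$ (since $\pi^N$ surjects onto every such monodromy group, being the profinite completion of $\pi^S$, which contains it).

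I expect the only genuine subtlety to be bookkeeping about \emph{which} fundamental group one means: $\pi^N(X,x)$ is a $k$-group scheme while $\pi^{\rm et}(X,x)$ is a profinite group, and the statement ``$\pi^N(X,x)=0$'' should be read as the triviality of the group scheme. One must check that triviality of the $k$-group scheme $\pi^N(X,x)$ implies triviality of its largest pro-étale (equivalently, constant over $k=\bar k$) quotient, which is $\underline{\pi^{\rm et}(X,x)}$; this is immediate since a quotient of the trivial group scheme is trivial. There is no hard analytic or geometric input here — the lemma is a soft consequence of Nori's construction of $\pi^N$ and the fact that $\pi^N$ dominates $\pi^{\rm et}$ — so the real work of the paper lies in the subsequent lemmas, not in this one.
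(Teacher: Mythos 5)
Your proof is correct and follows the same route as the paper: both reduce the lemma to the standard fact that $\pi^{\rm et}(X,x)$ (viewed as a constant pro-group scheme over $k=\bar k$) is a quotient of $\pi^N(X,x)$, so triviality of the latter forces triviality of the former. The paper simply cites \cite{EHS} for this surjectivity (noting it is the pro-smooth quotient) where you unwind Nori's construction via $\pi_*\sO_Y$; the content is the same.
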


\begin{proof}
As well known (see e.g. \cite[Remarks~2.10]{EHS}), thinking of $\pi^{\rm et}(X,s)$ as a constant $k=\bar k$-progroup scheme, the $k$-homomorphism $ \pi^N(X,x) \to \pi^{\rm et}(X,x)$ is surjective. In fact, this is the pro-smooth quotient of $\pi^N(X,x)$.
\end{proof}
Recall \cite[1.2]{L3} that a  bundle $E$ is said to be {\it numerically effective} if for any smooth projective curve $f: C \to X$, the minimal slope of $f^*E$ is nonnegative. This is equivalent to saying that the tautological line bundle $\sO(1)$ on $\P(E)$ is numerically effective.
\begin{lem}(See \cite[~1.2]{L4}) \label{lem2.2}
A vector bundle $E$ on $X$ is in
${\sf Ns}(X)$ if and only if both $E$ and $E^\vee $ are numerically effective  on $X$.
\end{lem}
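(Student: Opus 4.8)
The plan is to prove both implications by relating the numerical effectivity of $E$ and $E^\vee$ to the semistability of degree $0$ of all curve pullbacks, using the standard dictionary between the tautological bundle $\sO_{\P(E)}(1)$ and slopes of pullbacks to curves.

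First, for the ``only if'' direction, assume $E\in{\sf Ns}(X)$. Let $f:C\to X$ be any smooth projective curve. By definition $f^*E$ is semistable of degree $0$ on $C$; in particular its minimal slope is $0\ge 0$, so $E$ is numerically effective by the criterion recalled just before the lemma. The key observation for the dual is that a vector bundle $G$ on a smooth curve $C$ is semistable of degree $0$ if and only if $G^\vee$ is semistable of degree $0$ (dualizing reverses the order of a Jordan--H\"older/Harder--Narasimhan filtration and negates slopes). Since $f^*(E^\vee)=(f^*E)^\vee$, we conclude that $f^*(E^\vee)$ is semistable of degree $0$ on every curve $C$, hence $E^\vee\in{\sf Ns}(X)$ and in particular $E^\vee$ is numerically effective.

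For the ``if'' direction, assume both $E$ and $E^\vee$ are numerically effective. Fix a smooth projective curve $f:C\to X$ and set $G=f^*E$, a bundle of some degree $\delta$ on $C$. Numerical effectivity of $E$ gives $\mu_{\min}(G)\ge 0$, and numerical effectivity of $E^\vee$ gives $\mu_{\min}(G^\vee)\ge 0$, i.e.\ $\mu_{\max}(G)\le 0$. Combining, $\mu_{\max}(G)\le 0\le\mu_{\min}(G)$, which forces $\mu_{\max}(G)=\mu_{\min}(G)$; this is exactly the statement that the Harder--Narasimhan filtration of $G$ is trivial, i.e.\ $G$ is semistable, and moreover its slope is $0$, so $\deg G=0$. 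As $f$ was arbitrary, $E\in{\sf Ns}(X)$.

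The only point requiring a little care — the ``main obstacle'' such as it is — is the precise translation between the two formulations of numerical effectivity and, in the ``if'' direction, being careful that the inequality on $\mu_{\min}$ over \emph{all} curves (including those factoring through normalizations of arbitrary curves and through Frobenius twists) is genuinely equivalent to nefness of $\sO_{\P(E)}(1)$ as an $\R$-divisor class; this is the content of the statement recalled before the lemma and is due to Langer \cite[1.2]{L3}, so I would simply cite it. Everything else is the elementary Harder--Narasimhan bookkeeping on curves together with $(f^*E)^\vee=f^*(E^\vee)$.
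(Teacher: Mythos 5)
Your proof is correct and follows essentially the same route as the paper: the ``only if'' direction via the nonnegativity of the minimal slope of $f^*E$ and the closure of ${\sf Ns}(X)$ under duals, and the ``if'' direction via the sandwich $\mu_{\max}(f^*E)\le 0\le\mu_{\min}(f^*E)$. You merely spell out the Harder--Narasimhan bookkeeping that the paper leaves implicit, which is fine.
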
  
\begin{proof}
 Let $E$ be in ${\sf Ns}(X)$,  and $f: C \to X$ be a morphism of a smooth projective curve. By definition,  
$f^*E$ is semistable of degree $0$. 
The minimal slope of $f^*E$ is the slope of some stable quotient bundle, thus by semistability, it has to be nonnegative. 
As $E^\vee$ is in ${\sf Ns}(X)$ as well, we conclude that both $E$ and $E^\vee$ are numerically effective. Vice-versa, if $E$ and $E^\vee$ are numerically effective, then both $f^*E$ and $f^*E^\vee$ are numerically effective on $C$, thus $E$ is in ${\sf Ns}(X)$. 
\end{proof}

\begin{lem} \label{lem2.3}
 If $E \in {\sf Ns}(X)$, then
${\rm deg}\big(c_i(E)\cdot H^{d-i}\big) = 0$ for all $i\geq 1$ and all ample line bundles $H$.
\end{lem}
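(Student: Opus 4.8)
The plan is to bootstrap from the two-dimensional case, where the statement is essentially a known consequence of strong semistability with vanishing discriminant, to arbitrary dimension by cutting down with general hyperplane sections and using a Lefschetz-type argument together with Bogomolov's inequality. First I would reduce to showing that $\deg\bigl(c_i(E)\cdot H^{d-i}\bigr)=0$ for $i=1,2$ only: once these two top intersection numbers vanish, Langer's characterization of $\sf Ns(X)$ recalled in the introduction (\cite[Proposition~5.1]{L3}) tells us that $E$ is strongly semistable with $\mathrm{ch}_1\cdot H^{d-1}=\mathrm{ch}_2\cdot H^{d-2}=0$; but then on a general complete-intersection surface $S=H_1\cap\cdots\cap H_{d-2}$ the restriction $E|_S$ is again strongly semistable with $c_1(E|_S)\cdot H=0$ and $\Delta(E|_S)\cdot[\text{pt}] := \bigl(2rc_2-(r-1)c_1^2\bigr)(E|_S)=0$, and by Bogomolov's inequality applied to all the Frobenius pull-backs $F^{n*}(E|_S)$ (which are still semistable, since $E\in\sf Ns(X)$ is preserved by Frobenius pull-back) one forces $c_1(E|_S)^2=0$ and hence $c_2(E|_S)=0$ in the Chow group of $S$. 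Translating back, $\deg\bigl(c_1(E)^2\cdot H^{d-2}\bigr)=\deg\bigl(c_2(E)\cdot H^{d-2}\bigr)=0$, and combined with $\deg\bigl(c_1(E)\cdot H^{d-1}\bigr)=0$ this handles $i=1,2$.

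For the remaining $i\geq 3$ I would argue as follows. Fix a general pencil of hyperplanes and let $S$ be a general complete intersection of $d-2$ members of $|H|$, a smooth projective surface; by Lemma \ref{lem2.2}, $E|_S$ is again in $\sf Ns(S)$ (numerical effectivity of $E$ and $E^\vee$ restricts), so by the surface case just treated $c_1(E|_S)^2=c_2(E|_S)=0$ as numerical classes. Now I want to upgrade this to the statement that $c_1(E)$ and $c_2(E)$ are numerically trivial on $X$ against all of $H^{2\bullet}$, not merely against powers of $H$; the honest route is to observe that $\sf Ns(X)$ is stable under all the operations of a Tannakian category — in particular $\det E$ and $\det E\otimes(\det E)$, hence any line bundle of the form $c_1(E)^{\otimes m}$ — is Nori semistable, so $c_1(E)\in\Pic(X)$ is numerically trivial (a numerically effective line bundle whose dual is also numerically effective has degree zero on every curve). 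That kills every monomial in the Chern classes involving a factor of $c_1$. It remains to see that $\deg\bigl(c_i(E)\cdot H^{d-i}\bigr)=0$ for $i\geq 2$; cutting down to a surface reduces this to $c_2(E|_S)=0$, which we have, once we know the higher $c_i$ restricted to $S$ pull back correctly — but on the surface $S$ there are no Chern classes beyond $c_2$, and $\deg\bigl(c_i(E)\cdot H^{d-i}\bigr)$ on $X$ equals $\deg\bigl(c_i(E|_S)\bigr)$ on $S$ for $i\leq 2$ and is a sum of products of lower intersection numbers (via the splitting principle and the Whitney formula for the successive hyperplane restrictions) for $i\geq 3$, all of which we have shown to vanish.

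The cleanest packaging, which I would actually adopt, is: (1) use Lemma \ref{lem2.2} plus the fact that $\sf Ns$ is closed under pull-back to a complete-intersection surface $S\subset X$; (2) on $S$, invoke Bogomolov's inequality for the semistable bundles $F^{n*}(E|_S)$, $n\geq 0$, which gives $\Delta(E|_S)=\mathrm{ch}_2$-type quantity $\geq 0$ scaled by $p^{2n}$, forcing $\Delta(E|_S)=0$ and then $c_1^2=c_2=0$ on $S$ using also $\deg(c_1(E|_S))=0$ from Nori semistability; (3) combine with the Tannakian stability of $\det E$ to get $c_1(E)\equiv 0$ numerically on $X$; (4) deduce all $\deg(c_i(E)\cdot H^{d-i})=0$ by restriction to $S$ and the projection/Whitney formulas. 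The main obstacle is step (2): controlling the full discriminant on the surface from Bogomolov plus Frobenius, and then descending the numerical vanishing of $c_1^2$ and $c_2$ on $S$ back to the required intersection numbers on $X$ without losing positivity — the Frobenius-amplification trick (Chern classes of $F^{n*}E$ scale by powers of $p$ while Bogomolov's bound stays an inequality) is exactly what converts "$\geq 0$" into "$=0$", and that is the crux of the argument.
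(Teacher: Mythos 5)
Your proposal correctly isolates two genuine inputs (numerical triviality of $\det E$ via tensor-closure of ${\sf Ns}(X)$, and a Bogomolov-type inequality for strongly semistable sheaves), but the step you yourself flag as the crux does not work, and the failure is structural rather than presentational. Bogomolov's inequality applied to $F^{n*}(E|_S)$ gives $\Delta(F^{n*}(E|_S)) = p^{2n}\Delta(E|_S)\geq 0$, which is exactly equivalent to $\Delta(E|_S)\geq 0$ for every $n$: multiplying a nonnegative number by $p^{2n}$ never produces a contradiction, so the ``Frobenius-amplification trick'' cannot convert ``$\geq 0$'' into ``$=0$'' here. (That trick runs in the opposite direction: it upgrades a uniform bound $\Delta(F^{n*}E)\geq -C$ into $\Delta(E)\geq 0$, i.e.\ it is how one \emph{proves} Bogomolov for strongly semistable sheaves in characteristic $p$.) What is missing is an \emph{upper} bound on $\deg\bigl(c_2(E)\cdot H^{d-2}\bigr)$, which must come from another source --- either Fulton--Lazarsfeld-type positivity of Schur polynomials of nef bundles (giving $(c_1^2-c_2)\cdot H^{d-2}\geq 0$, hence $c_2\cdot H^{d-2}\leq 0$ once $c_1$ is numerically trivial), or, as the paper does, from boundedness.

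The second gap is the case $i\geq 3$. Restricting to a complete-intersection surface kills $c_i(E|_S)$ for $i\geq 3$ for trivial dimension reasons; the number $\deg\bigl(c_i(E)\cdot H^{d-i}\bigr)$ is computed on an $i$-dimensional complete intersection and is \emph{not} a universal polynomial in $\deg(c_1\cdot H^{d-1})$, $\deg(c_2\cdot H^{d-2})$, $\deg(c_1^2\cdot H^{d-2})$ --- already on a threefold $\deg c_3(E)$ is an independent invariant --- so the splitting principle and Whitney formula do not reduce $i\geq 3$ to $i\leq 2$. The paper's proof (following Langer's Theorem~4.1) sidesteps both problems at once: since every $F^{n*}E$ is semistable, the family $\{F^{n*}E\}_{n\geq 0}$ is bounded, hence the set of integers $\deg\bigl(c_i(F^{n*}E)\cdot H^{d-i}\bigr)=p^{ni}\deg\bigl(c_i(E)\cdot H^{d-i}\bigr)$ is finite for each $i$; a nonzero integer multiplied by unbounded powers of $p$ cannot remain in a finite set, so all these numbers vanish. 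Your Frobenius-scaling instinct is the right one, but it must be played against \emph{finiteness} of the possible Chern numbers, not against a one-sided inequality.
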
  
\begin{proof} 
 This is the argument of the proof of \cite[Theorem~4.1,Proof]{L3}): since $E$ is strongly semistable,  by boundedness, there are finitely many such natural numbers ${\rm deg}\big(c_i(F^*E)\cdot H^{d-i}\big)=p^i\cdot {\rm deg}\big(c_i(E))$, one concludes that they are 0. 
\end{proof}  
Let $E$ be an essentially finite bundle.
 Let $\langle E\rangle$ be the full subcategory of $\sC^N(X)$ spanned by $E$, and set $G(E,x)={\rm Aut}^{\otimes}(\langle E\rangle, \omega_x)$. Then there is an exact sequence $1\to G(E,x)^0\to G(E,x)\to G(E,x)^{{\rm et}}\to 1$ where $G(E,x)^0$ is local and $G(E,x)^{{\rm et}}$ is \'etale and is a quotient of $\pi_1^{{\rm et}}(X,x)$.
We denote by $\sC^{\rm loc}(X)\subset \sC^N(X)$ the full subcategory of essentially finite bundles $E$ which have the property that $G(E,x)^0 =G(E,x)$. We denote by $\pi^{\rm loc}(X,x)$ the Tannaka group scheme $\varprojlim_{E, \ G(E,x)^0= G(E,x) } G(E,x)^0$. This group has been studied in \cite{EHS} (where it is denoted by $\pi^F(X,x)$) and in \cite{MS2}. It is a quotient $k$-group scheme of $\pi^N(X,x)$.  
\begin{lem} \label{lem2.4} On has the following implications. 
\begin{itemize}      
\item[(1)] If $\pi_1^{{\rm et}}(X,x)=\{1\}$, and $H^0(X, \Omega^1_X)=0$, then $\pi_1^N(X,x)=\{1\}$.
\item[(2)] If  $\pi_1^N(X,x)=\{1\}$, then  $\pi^{\rm loc}(X,x)=\{1\}$ and $H^1(X, \sO_X)=0$. 
\end{itemize}

\end{lem}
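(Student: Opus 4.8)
The plan is to treat the two statements separately, using the dictionary between the various fundamental group schemes and the vanishing of explicit cohomology groups, together with the structure theory of the local and étale parts of Nori's fundamental group scheme.

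For (1): we know that $\pi_1^N(X,x)$ sits in an extension $1\to\pi^{\mathrm{loc}}(X,x)\to\pi_1^N(X,x)\to\pi_1^{\mathrm{et}}(X,x)\to 1$ (more precisely the local part is the pro-local quotient and the étale part is the pro-étale quotient, cf. \cite{EHS}, \cite{N}). So to conclude $\pi_1^N(X,x)=\{1\}$ it suffices, given $\pi_1^{\mathrm{et}}(X,x)=\{1\}$, to show $\pi^{\mathrm{loc}}(X,x)=\{1\}$. An arbitrary nontrivial local finite group scheme quotient of $\pi^{\mathrm{loc}}(X,x)$ admits a nontrivial quotient by $\alpha_p$ or by $\mu_p$, hence $\pi^{\mathrm{loc}}(X,x)$ is nontrivial iff there exists a nontrivial $\alpha_p$- or $\mu_p$-torsor over $X$ (coming from an essentially finite bundle). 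Such torsors are classified by $H^1_{\mathrm{fppf}}(X,\alpha_p)$ and $H^1_{\mathrm{fppf}}(X,\mu_p)$; from the Artin--Schreier--type and Kummer exact sequences in the fppf topology one reads that $H^1_{\mathrm{fppf}}(X,\alpha_p)$ injects into the cokernel of Frobenius on $H^1(X,\sO_X)$ together with a piece of $H^0(X,\Omega^1_X)$ (via $d\colon \sO_X\to\Omega^1_X$ with kernel $\sO_X^p$), while $H^1_{\mathrm{fppf}}(X,\mu_p)$ maps to the $p$-torsion of $\mathrm{Pic}(X)$, which is controlled by $\pi_1^{\mathrm{et}}$ and $H^1(X,\sO_X)$. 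The hypotheses $\pi_1^{\mathrm{et}}(X,x)=\{1\}$ and $H^0(X,\Omega^1_X)=0$ force these groups to vanish, hence $\pi^{\mathrm{loc}}(X,x)=\{1\}$ and therefore $\pi_1^N(X,x)=\{1\}$.

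For (2): $\pi^{\mathrm{loc}}(X,x)=\{1\}$ is immediate since it is a quotient $k$-group scheme of $\pi_1^N(X,x)$ (stated in the paragraph preceding the lemma). For $H^1(X,\sO_X)=0$: a nonzero class in $H^1(X,\sO_X)$ produces a nonsplit extension $0\to\sO_X\to E\to\sO_X\to 0$. Such an $E$ is Nori semistable — on any smooth projective curve $C\xrightarrow{f}X$ the pullback $f^*E$ is an extension of $\sO_C$ by $\sO_C$, hence semistable of degree $0$ — and it is finite in the sense of Weil (indeed $E\otimes E\cong \mathrm{Sym}^2 E\oplus \wedge^2 E$ and a unipotent rank-two bundle satisfies a Weil-type relation; concretely iterated Frobenius pullbacks or symmetric powers of a nontrivial self-extension of $\sO_X$ stay in a bounded family only in the étale direction, so $E$ is essentially finite with $G(E,x)$ a nontrivial quotient — in fact $\mathbb{G}_a$-type, but after noting the unipotent monodromy it is pro-finite here — of $\pi_1^N(X,x)$). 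Since $\pi_1^N(X,x)=\{1\}$, every such $E$ splits, i.e. $H^1(X,\sO_X)=0$.

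The main obstacle is the finiteness/Tannakian bookkeeping in part (2): one must make precise why a nonsplit self-extension of $\sO_X$ gives a \emph{nontrivial} quotient of $\pi_1^N(X,x)$ — equivalently, why it cannot be trivialized after passing to the category $\mathsf{Ns}(X)$ — and symmetrically in part (1) why the only obstructions to triviality of $\pi^{\mathrm{loc}}$ are exactly the two cohomology groups named. Here the clean route is to invoke the description of the abelianized, $p$-torsion part of these group schemes: $\mathrm{Hom}(\pi_1^N(X,x),\mathbb{Z}/p)=H^1_{\mathrm{et}}(X,\mathbb{Z}/p)$ (étale part), $\mathrm{Hom}(\pi_1^N(X,x),\mu_p)=H^1_{\mathrm{fppf}}(X,\mu_p)$ and $\mathrm{Hom}(\pi_1^N(X,x),\alpha_p)=H^1_{\mathrm{fppf}}(X,\alpha_p)$, and the observation that a pro-group scheme all of whose abelian $p$-torsion quotients vanish, and whose pro-étale quotient vanishes, is trivial. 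Combined with the standard identifications of the right-hand sides with the displayed cohomology, this gives both implications without any moduli-space input.
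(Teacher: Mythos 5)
Your part (2) starts correctly: $\pi^{\rm loc}(X,x)=\{1\}$ is indeed immediate because it is a quotient of $\pi^N(X,x)$, and the reduction of (1) to $\pi^{\rm loc}(X,x)=\{1\}$ under the hypothesis $\pi_1^{\rm et}(X,x)=\{1\}$ is also fine. But the core of your argument rests on a false group-theoretic principle: that a nontrivial finite \emph{local} group scheme admits $\alpha_p$ or $\mu_p$ as a quotient, equivalently that a pro-group scheme with trivial pro-\'etale quotient and no nontrivial homomorphisms onto $\alpha_p$ or $\mu_p$ is trivial. For $p\ge 3$ the Frobenius kernel $(SL_2)_1$ is a nonabelian finite local group scheme whose Lie algebra $\mathfrak{sl}_2$ is simple, so it has no nontrivial proper normal subgroup schemes, hence no quotient isomorphic to $\alpha_p$ or $\mu_p$. (It does \emph{contain} such subgroups, but a subgroup of the fundamental group scheme produces a torsor over a covering of $X$, not over $X$, so this does not feed into $H^1_{\rm fppf}(X,\alpha_p)$ or $H^1_{\rm fppf}(X,\mu_p)$.) Consequently the vanishing of these two cohomology groups does not force $\pi^{\rm loc}(X,x)=\{1\}$, and the ``clean route'' you propose at the end fails for the same reason. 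The paper's proof of (1) avoids abelianization entirely: $\pi_1^{\rm et}=\{1\}$ forces $G(E,x)=G(E,x)^0$ for every essentially finite $E$, so $(F^n)^*E$ is trivial for some $n$; then $H^0(X,\Omega^1_X)=0$ implies the trivial bundle carries only the trivial connection, so by Cartier descent $(F^{n-1})^*E$, the sheaf of flat sections of the canonical connection on $(F^n)^*E$, is again trivial, and one walks down the Frobenius tower. This is where $H^0(X,\Omega^1_X)=0$ is actually used --- not to control $H^1_{\rm fppf}(X,\alpha_p)$.

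In part (2) the gap is exactly the one you flag yourself: you must show that a nonsplit extension $0\to\sO_X\to E\to\sO_X\to 0$ is detected by $\pi^N$ and not merely by $\pi^S$, i.e.\ that $E$ is \emph{essentially finite}; the ${\rm Sym}^2\oplus\wedge^2$ remark and the ``$\G_a$-type but pro-finite here'' aside do not establish this, and it is not formal. The paper argues instead directly on cohomology: decompose $H^1(X,\sO_X)$ under the $p$-linear action of $F^*$ into a part where $F^*$ is bijective and a part where it is nilpotent. The bijective part equals $H^1_{\rm et}(X,\Z/p)\otimes_{\F_p}k={\rm Hom}(\pi^{\rm et}(X),\Z/p)\otimes_{\F_p} k$, which vanishes by Lemma \ref{lem2.1}; a nonzero nilpotent class yields, after applying $F^*$ finitely many times, a nonzero class killed by $F^*$, i.e.\ a nontrivial $\alpha_p$-torsor, contradicting $\pi^N(X,x)=\{1\}$. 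This is close in spirit to your $\alpha_p$/$\mu_p$ computation, but it is applied only in the abelian unipotent rank-two situation, where it actually suffices.
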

\begin{proof}
We first prove (1).   Since $G(E,x)^{{\rm et}}$  is a quotient of $\pi_1^{{\rm et}}(X,x)=\{1\}$, one has
$ G(E,x)^0= G(E,x)$. This implies that there is a $n\in \N\setminus \{0\}$ such that $(F^n)^*E$ is trivial. Since $H^0(X, \Omega^1_X)=0$, there is only one connection on $(F^n)^*E$, the trivial one. Thus the connection with flat sections $(F^{n-1})^*E$ is trivial, thus $(F^{n-1})^*E$ is trivial. Repeating the argument, we see that $E$ is trivial. This finishes the proof of (1). \\[.1cm]
We prove (2). 
The absolute Frobenius map $F: X \to X$ induces a $p$-linear endomorphism $F^*$ on 
$H^1(X,\sO_X)$. This induces a
decomposition  $$H^1(X, \sO_X)  =  H^1(X, \sO_X)_{\rm ss} \oplus 
H^1(X, \sO_X)_{\rm nilp},$$ where $F^*$ is bijective on the 
first factor and nilpotent on the second factor. Since $$H^1(X, \sO_X)_{\rm ss}=H^1(X_{\rm et}, \Z/p)\otimes_{\F_p}k ={\rm Hom}(\pi^{\rm et}(X), \Z/p)\otimes_{\F_p}k, $$ one concludes  $H^1(X, \sO_X)_{\rm ss}=0$.
Let $0\neq y \in  H^1(X, \sO_X)_{\rm nilp}$. There exists $t \in \N\setminus \{0\}$ such that 
$(F^{t -1})^* (y )  \neq 0$, but $(F^t)^*(y) =  0$. Then $(F^{t-1})^*(y)$ defines an $\alpha_p$-torsor, a contradiction. This proves $H^1(X, \sO_X)=0$. 
\end{proof}
\begin{lem} \label{lem2.5} If $\pi^N(X,x) = \{1\}$, the line bundles in ${\sf Ns}(X)$ are trivial. 
\end{lem}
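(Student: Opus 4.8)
The plan is to reduce the statement to the numerical triviality established earlier. By Lemma \ref{lem2.2}, a line bundle $L$ in ${\sf Ns}(X)$ is numerically effective together with its inverse $L^\vee$, so $\deg(c_1(L)\cdot H^{d-1}) = 0$ by Lemma \ref{lem2.3}; in fact, since $L$ and $L^{-1}$ are both nef, $L$ is numerically trivial (its restriction to every curve has degree $0$). So the real task is to upgrade numerical triviality of $L$ to actual triviality, using the hypothesis $\pi^N(X,x) = \{1\}$.

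First I would invoke Lemma \ref{lem2.4}(2): since $\pi^N(X,x) = \{1\}$ we get $H^1(X,\sO_X) = 0$, and also $\pi^{\rm loc}(X,x) = \{1\}$. The vanishing $H^1(X,\sO_X) = 0$ controls the infinitesimal structure of $\Pic(X)$: it shows that $\Pic^0_{\rm red}(X)$ is $0$-dimensional, i.e.\ $\Pic^\tau(X)$ (the numerically trivial line bundles, which form a proper scheme) is a finite group scheme. Now $L$, being numerically trivial, defines a $k$-point of $\Pic^\tau(X)$. Since $\Pic^\tau(X)$ is a finite group scheme over $k$, $L$ is a \emph{finite} line bundle in the sense of Weil: some power $L^{\otimes m}$ with $m = \#\Pic^\tau(X)$ (or a $p$-power thereof, handling the local part) is trivial, equivalently $L$ satisfies a relation $f(L)\cong g(L)$. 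Hence $L$ is essentially finite, so $L \in \sC^N(X)$, and the Tannaka group $G(L,x)$ is a quotient of $\pi^N(X,x) = \{1\}$; therefore $G(L,x) = \{1\}$, which forces $L$ to be the trivial object of $\langle L\rangle$, i.e.\ $L\cong \sO_X$.

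Alternatively, and perhaps more cleanly, one separates the étale and local parts of the torsion. If $L^{\otimes m}\cong\sO_X$ with $(m,p)=1$, then $L$ corresponds to a character of $\pi_1^{\rm et}(X,x)$, which is trivial by Lemma \ref{lem2.1} (applied via $\pi^N(X,x) = \{1\}$), so that part of $L$ is trivial. If instead $L^{\otimes p^n}\cong \sO_X$ for some $n$, i.e.\ $(F^n)^*L \cong \sO_X$ after identifying Frobenius pullback with the $p^n$-power map on $\Pic$, then $L$ lies in $\sC^{\rm loc}(X)$ and defines a point of $\pi^{\rm loc}(X,x) = \{1\}$, hence $L$ is trivial; the general numerically trivial $L$ is a product of these two cases. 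The main obstacle is the first reduction: one must know that $H^1(X,\sO_X) = 0$ forces every numerically trivial line bundle to be torsion (finiteness of $\Pic^\tau$), and that the torsion order is supported only at primes where we have control—this is exactly where Lemma \ref{lem2.4}(2) together with Lemma \ref{lem2.1} is used, rather than any moduli-space or Hrushovski input, which enter only for higher-rank bundles later.
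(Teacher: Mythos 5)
Your proposal is correct and follows essentially the same route as the paper: nefness of $L$ and $L^\vee$ gives numerical triviality, $H^1(X,\sO_X)=0$ from Lemma \ref{lem2.4}(2) plus finiteness of $\Pic^\tau/\Pic^0$ forces $L$ to be torsion, and a torsion line bundle is essentially finite, hence trivial since $\pi^N(X,x)=\{1\}$. The paper phrases the last step as ``a non-trivial finite-order line bundle defines a torsor under a non-trivial finite group scheme,'' which is the same observation as your Tannakian/\'etale-versus-local splitting.
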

\begin{proof} As both $L$ and $L^\vee $ are numerically effective on $X$, $L$ is numerically 
equivalent to $0$. The group ${\rm Num}_0(X)/{\rm Alg}_0(X)$  is a finite group.
As $H^1(X, \mathcal{O}_X) = 0$, the Picard scheme of $X$ is a reduced point. Hence 
$L$ has finite order. But any non-trivial line bundle of finite order on $X$ defines
a torsor over $X$ under a non-trivial finite group-scheme, a contradiction as 
$\pi^N(X,x)$ is trivial. So $L$ is trivial.
\end{proof}
\begin{lem}(See \cite[Theorem~4.1]{L3} \label{lem2.6}
 Let $E \in {\sf Ns}(X)$. Then the following properties hold true. 
\begin{itemize}
\item[(a)] $E$ is semistable with respect to  $H$.
\item[(b)]If $ 0 \subset E_0 \subset E_1\subset \ldots \subset E_n = E$
is the Jordan-H\"older filtration (or the stable filtration) of $E$,
then each subquotient $E_i/E_{i-1}$ is locally free, with ${\rm deg}\big(c_j(E_i/E_{i-1})\cdot H^{d-i}\big) = 0$,
for all $j \ge 0$, all $i > 0$ and all ample line bundles $H$.
\end{itemize} 
\end{lem}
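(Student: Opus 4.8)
The plan is to derive (a) from the behaviour of $E$ on curves and (b) from an induction on the rank resting on Langer's structure theory for semistable sheaves in characteristic $p$. For (a), I would first observe that $E$ is in fact \emph{strongly} semistable of degree $0$ on every curve: if $\iota\colon C\to X$ is a smooth projective curve with absolute Frobenius $F_C$, then each composite $\iota\circ F_C^n\colon C\to X$ is again a morphism from a smooth projective curve, so $(F_C^n)^*(\iota^*E)=(\iota\circ F_C^n)^*E$ is semistable of degree $0$ for all $n$. Consequently, if some $(F^n)^*E$ had a subsheaf of positive $\mu_H$-slope (observe that $\mu_H((F^n)^*E)=0$ by Lemma \ref{lem2.3}), its restriction to a general complete intersection curve $C$ cut out by $d-1$ members of $|mH|$ with $m\gg 0$ would contradict the semistability of $(F^n)^*E|_C=(F_C^n)^*(E|_C)$; since such curves detect $\mu_H$-semistability (Mehta--Ramanathan), every $(F^n)^*E$, and in particular $E$, is $\mu_H$-semistable, with $\mu_H(E)=0$. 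The strong $\mu_H$-semistability of $E$ obtained here is used below.

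For (b), note that $\det E$ restricts to a degree-$0$ line bundle on every curve, hence $\det E\in{\sf Ns}(X)$, so by Lemma \ref{lem2.2} both $\det E$ and its dual are nef; thus $c_1(E)$ is numerically trivial, and with Lemma \ref{lem2.3} we obtain $\deg({\rm ch}_1(E)\cdot H^{d-1})=\deg({\rm ch}_2(E)\cdot H^{d-2})=0$. I would now induct on $\rank E$, the stable case being exactly Lemma \ref{lem2.3}. If $E$ is not stable, let $E_1$ be the first nonzero (saturated) term of a stable filtration of $E$; then $E_1$ is a stable subsheaf of slope $0$, $E/E_1$ is a torsion-free quotient of slope $0$, and both are strongly $\mu_H$-semistable. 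For the exact sequence $0\to E_1\to E\to E/E_1\to 0$ the standard discriminant identity, intersected with $H^{d-2}$, reads
$$\frac{\deg(\Delta(E)\cdot H^{d-2})}{\rank E}=\frac{\deg(\Delta(E_1)\cdot H^{d-2})}{\rank E_1}+\frac{\deg(\Delta(E/E_1)\cdot H^{d-2})}{\rank(E/E_1)}-\frac{\rank E_1\cdot\rank(E/E_1)}{\rank E}\,\deg(\xi^{2}\cdot H^{d-2}),$$
where $\Delta=2\rank\,c_2-(\rank-1)c_1^{2}$ and $\xi=\tfrac{c_1(E_1)}{\rank E_1}-\tfrac{c_1(E/E_1)}{\rank(E/E_1)}$. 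The left-hand side is $0$; Langer's Bogomolov inequality in characteristic $p$ \cite{L3} makes the first two terms on the right $\ge 0$; and $\deg(\xi\cdot H^{d-1})=0$ forces $\deg(\xi^{2}\cdot H^{d-2})\le 0$ by the Hodge index theorem. Hence all three terms vanish. The equality case of the Hodge index theorem then makes $\xi$ numerically trivial on a general complete intersection surface, and since $c_1(E_1)+c_1(E/E_1)=c_1(E)$ is numerically trivial, so are $c_1(E_1)$ and $c_1(E/E_1)$ on that surface; thus $\deg(c_1(E_1)^{2}\cdot H^{d-2})=\deg(c_1(E/E_1)^{2}\cdot H^{d-2})=0$, and then $\deg(\Delta(E_1)\cdot H^{d-2})=\deg(\Delta(E/E_1)\cdot H^{d-2})=0$ gives $\deg(c_2(E_1)\cdot H^{d-2})=\deg(c_2(E/E_1)\cdot H^{d-2})=0$.

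At this stage $E_1$ and $E/E_1$ are torsion-free, strongly semistable, with $\deg({\rm ch}_1\cdot H^{d-1})=\deg({\rm ch}_2\cdot H^{d-2})=0$, and the same then holds for their reflexive hulls (the Bogomolov inequality once more squeezes the codimension-$2$ part of the defect of $G\hookrightarrow G^{\vee\vee}$ to zero). Hence by \cite[Proposition~5.1]{L3} the reflexive hulls lie in ${\sf Ns}(X)$, so that $E_1=E_1^{\vee\vee}$ and $E/E_1=(E/E_1)^{\vee\vee}$ are locally free objects of ${\sf Ns}(X)$ (objects of ${\sf Ns}(X)$ being vector bundles, cf.\ \cite{L4}). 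Applying Lemma \ref{lem2.3} to $E_1$ and the induction hypothesis to $E/E_1$, and observing that a stable filtration of $E$ can be chosen refining $0\subset E_1\subset E$, one concludes that all its graded pieces are locally free with all Chern numbers against powers of $H$ equal to $0$, which is (b). I expect the substance of the argument to sit in this middle step: it relies on the positive-characteristic Bogomolov inequality for strongly semistable sheaves and on the passage from vanishing Chern numbers to local freeness — precisely the content of the theorem of Langer cited in the statement.
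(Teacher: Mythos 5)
The paper does not actually prove this lemma: it is quoted verbatim from Langer's \cite[Theorem~4.1]{L3}, so there is no internal argument to compare yours against. What you have written is, in substance, a reconstruction of Langer's own proof, and it is essentially sound: part (a) via restriction to general complete intersection curves is exactly the argument the paper itself uses later (in the proof of Lemma \ref{lem2.8}); for part (b) your use of the discriminant identity, the characteristic-$p$ Bogomolov inequality for strongly $\mu$-semistable sheaves, and the Hodge index theorem is the right mechanism, and your unstated claim that $E_1$ and $E/E_1$ are strongly semistable is correct (a slope-$0$ subsheaf, resp.\ quotient, of a slope-$0$ strongly semistable sheaf is strongly semistable, since any destabilizing subsheaf of $F^{n*}E_1$ would destabilize $F^{n*}E$).

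The one place where your argument is genuinely under-justified is the sentence ``so that $E_1=E_1^{\vee\vee}$ and $E/E_1=(E/E_1)^{\vee\vee}$.'' For $E_1$ this is fine: a saturated subsheaf of a locally free sheaf is reflexive. But for $Q=E/E_1$, what your Bogomolov/Hodge-index squeeze actually yields is only that the codimension-$2$ cycle class of $T=Q^{\vee\vee}/Q$ vanishes, i.e.\ that $T$ is supported in codimension $\geq 3$; this does not by itself give $T=0$, and Proposition~5.1 of \cite{L3} applies to the reflexive hull, not to $Q$. The missing step is a homological one: since $0\to E_1\to E\to Q\to 0$ exhibits $Q$ as the cokernel of a map of locally free sheaves, $Q$ has homological dimension $\leq 1$, so $\mathcal{E}xt^i(Q,\sO_X)=0$ for $i\geq 2$; from $0\to Q\to Q^{\vee\vee}\to T\to 0$ with $Q^{\vee\vee}$ locally free one then gets $\mathcal{E}xt^i(T,\sO_X)=0$ for $i\geq 3$, which is incompatible with $T\neq 0$ supported in codimension $\geq 3$. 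With that (standard but necessary) supplement, your induction closes and the proof is complete.
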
 
\noindent
Now we continue with the proof of Theorem \ref{thm1.2}. For $E$ in ${\sf Ns}(X)$, we want to show that $E$ is 
trivial. We may assume, by induction on the rank, that if $ W \in {\sf Ns}(X)$,
with rank $W < r = {\rm rank}(E)$, then $W$ is trivial, using Lemma \ref{lem2.5}. 
Define a sequence of bundles on $X$ by  $E_0 := E$, and 
$E_n = F^*(E_{n-1})$, for $n \geq 1$. This a sequence of bundles on $X$ 
and ${\rm deg}\big(c_i(E_n)\cdot H^{d-i}\big) = 0$ for all $n \geq 0$ and all $i \geq 1$.

\begin{lem} \label{lem2.7} $E_i \not\simeq E_j$, for all $i,j$ with $i \neq j$.
\end{lem}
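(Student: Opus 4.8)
\medskip

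\noindent We read Lemma~\ref{lem2.7} as the assertion: if $E_i\simeq E_j$ for some $i\neq j$, then $E$ is trivial (the conclusion of Theorem~\ref{thm1.2} we are after anyway). So assume $E$ is non-trivial and that $E_i\simeq E_j$ with $i<j$, and derive a contradiction. Put $n:=j-i>0$. Pulling the isomorphism back by powers of the absolute Frobenius $F$ yields $E_{i+tn}\simeq E_i$ for every $t\ge 0$; in particular $(F^n)^*E_i\simeq E_i$, and the orbit $\{[E_m]:m\ge 0\}$ is finite. Each $E_m$ is again Nori semistable: for a morphism $g\colon C\to X$ from a smooth projective curve, functoriality of the absolute Frobenius ($F\circ g=g\circ F_C$, with $F_C$ the Frobenius of $C$) gives $g^*E_m=(g\circ F_C^{\,m})^*E$, and $g\circ F_C^{\,m}$ is again a morphism from a smooth projective curve to $X$, so $g^*E_m$ is semistable of degree $0$ by Definition~\ref{defn1.1}.

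\medskip

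\noindent The plan is to show that $E_i$ is \emph{\'etale trivializable}, i.e.\ that it becomes trivial on some connected finite \'etale cover of $X$. Granting this, Lemma~\ref{lem2.1} forces that cover to be trivial (since $\pi^{\rm et}(X,x)=\{1\}$), so $E_i\simeq\sO_X^{\oplus r}$; hence $(F^i)^*E$ is trivial, so $E$ lies in $\sC^{\rm loc}(X)$, the category of bundles trivialized by a power of $F$ (see \cite{EHS}, \cite{MS2}), and since $\pi^{\rm loc}(X,x)=\{1\}$ by Lemma~\ref{lem2.4} we conclude $E\simeq\sO_X^{\oplus r}$ --- contradicting that $E$ is non-trivial. (For $r=1$ one can bypass this: $E_i\in{\sf Ns}(X)$ with $\pi^N(X,x)=\{1\}$ is already trivial by Lemma~\ref{lem2.5}.)

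\medskip

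\noindent Everything thus reduces to the claim that a vector bundle $V$ on $X$ with $(F^n)^*V\simeq V$ is \'etale trivializable. Over $k=\bar\F_p$ this is precisely the Lange--Stuhler theorem \cite[Satz~1.4]{LS}, and we are done. Over an arbitrary algebraically closed field $k$ this is the real obstacle, and is the point at which Hrushovski's theorem takes the place of Lange--Stuhler: the strategy is to spread $(X,E_i)$ and the isomorphism $(F^n)^*E_i\simeq E_i$ out over a finitely generated $\F_p$-subalgebra $R\subseteq k$; to view the finite orbit $\{[E_m]\}$ as a set of points of Langer's quasi-projective moduli space \cite[Theorem~4.1]{L2}, on which $F^*$ acts (the strongly semistable locus being $F^*$-stable); to apply Lange--Stuhler on the closed, and hence finite-field, fibres of $\Spec R$; and then to invoke \cite[Corollary~1.2]{H} to propagate \'etale triviality from those special fibres to the generic one, i.e.\ to $E_i$ over $k$. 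One then finishes as above. If one wishes, Lemma~\ref{lem2.6} and the inductive hypothesis first allow a reduction to the case that $E_i$ is stable, but this is not needed, as the argument works for any $E_i$ with $(F^n)^*E_i\simeq E_i$.
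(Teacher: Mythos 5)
Your reduction is exactly the paper's: from $E_i\simeq E_j$ you extract $(F^{t})^*E_i\simeq E_i$ with $t=j-i>0$, deduce \'etale trivializability of $E_i$, use $\pi^{\rm et}(X,x)=\{1\}$ to force $E_i\simeq \sO_X^{\oplus r}$, and then use that $E_i=(F^i)^*E$ is trivial to place $E$ in the local part of the Tannaka group, so that $\pi^{\rm loc}(X,x)=\{1\}$ from Lemma \ref{lem2.4}(2) kills $E$. That skeleton is correct and is what the paper does.

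The gap is in your treatment of the step ``$(F^{t})^*V\simeq V$ implies $V$ is \'etale trivializable'' over an arbitrary algebraically closed field. You restrict Lange--Stuhler to $k=\bar\F_p$ and, for general $k$, substitute a sketch in which Hrushovski's theorem is supposed to ``propagate \'etale triviality from closed fibres to the generic fibre'' of a spreading-out. That is not what \cite[Corollary~1.2]{H} does: it produces periodic points of a Frobenius-twisted correspondence, and descending \'etale trivializability from a dense set of closed fibres to the generic fibre is a genuinely hard problem (of ``deep Frobenius descent'' type) that your sketch does not solve. Fortunately the detour is unnecessary: \cite[Satz~1.4]{LS} holds over any algebraically closed field of characteristic $p$, because its proof is field-independent --- given $\sigma\colon (F^{t})^*V\xrightarrow{\ \sim\ }V$ and $q=p^{t}$, the \'etale sheaf of sections $e$ with $\sigma((F^{t})^*e)=e$ is locally the solution set of $A\cdot x^{[q]}=x$, a finite \'etale cover of degree $q^{r}$ whose sections form an $\F_q$-local system trivializing $V$. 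The paper therefore applies Lange--Stuhler directly here, for any $k$. In the paper, Hrushovski's theorem is used for a different purpose entirely, and only later: to show that the Verschiebung orbit $\{E_0,E_1,\dots\}$ cannot have positive-dimensional closure in the moduli space, i.e.\ to establish the finiteness of the orbit --- which in Lemma \ref{lem2.7} is precisely the hypothesis, not something to be proved. With Lange--Stuhler correctly quoted in full generality, your argument closes and coincides with the paper's.
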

\begin{proof} Assume there are $i,j$ such that $i < j$ and $E_i \simeq E_j$ .
Then $(F^t)^*(E_i) \simeq E_i$, where $t = j -i\neq 0$. If $E_i$ is not trivial, by  the theorem of Lange-Stuhler \cite[Satz~1.4]{LS},  $E_i$ 
becomes trivial on a 
non-trivial \'etale finite  covering of $X$.  As 
$\pi^{{\rm et}}(X,x) = \{1\}$, one must have $E_i$ trivial. 
But by definition, $E_i=(F^i)^*(E_0)$, thus $G(\langle E_0 \rangle, x)$ is local, thus $E_0$ is trivial as $\pi^{\rm loc}(X,x)$ is trivial by Lemma \ref{lem2.4}, 2). 
\end{proof}
\begin{lem} \label{lem2.8} In order to prove Theorem \ref{thm1.2}, we may assume that all the $E_i$ are stable on $X$
\end{lem}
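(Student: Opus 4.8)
The plan is to read the statement as a reduction: I will show that if $E_n$ fails to be stable for some $n\ge 0$, then $E$ is already trivial, so that in the only remaining situation every $E_i$ is stable on $X$. Suppose then that $E_n$ is not stable. First I would record that $E_n$ still lies in ${\sf Ns}(X)$: since the pull-back of a nef line bundle stays nef, both $F^{*}E$ and $F^{*}E^{\vee}$ remain numerically effective, so by Lemma~\ref{lem2.2} each $E_i$, in particular $E_n$, belongs to ${\sf Ns}(X)$. Hence, by Lemma~\ref{lem2.6}(a) and Lemma~\ref{lem2.3}, $E_n$ is $H$-semistable of slope $0$ with ${\rm deg}\big(c_i(E_n)\cdot H^{d-i}\big)=0$ for all $i\ge 1$, and it carries a Jordan--H\"older filtration $0\subset V_0\subset\cdots\subset V_m=E_n$ whose graded pieces $W_j:=V_j/V_{j-1}$ are at least two in number (as $E_n$ is not stable) and each of rank strictly less than $r={\rm rank}(E)$.

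The main step is to see that each $W_j$ again lies in ${\sf Ns}(X)$, so that the rank induction applies to it. Lemma~\ref{lem2.6}(b), valid for \emph{every} ample $H$, gives that $W_j$ is locally free with ${\rm deg}\big(c_1(W_j)\cdot H^{d-1}\big)={\rm deg}\big(c_2(W_j)\cdot H^{d-2}\big)=0$; polarising in $H$ yields in addition ${\rm deg}\big(c_1(W_j)^{2}\cdot H^{d-2}\big)=0$, hence ${\rm deg}\big({\rm ch}_1(W_j)\cdot H^{d-1}\big)={\rm deg}\big({\rm ch}_2(W_j)\cdot H^{d-2}\big)=0$. Moreover $W_j$ is strongly semistable: pulling the filtration back along $F^{\,s}$ and using that a subsheaf, respectively a quotient sheaf, of the same slope of a semistable sheaf is semistable, one gets $(F^{\,s})^{*}W_j$ semistable for all $s$. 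By Langer's characterisation \cite[Proposition~5.1]{L3} this puts $W_j$ in ${\sf Ns}(X)$. Since ${\rm rank}(W_j)<r$, the induction hypothesis on the rank (set up before Lemma~\ref{lem2.7}) forces $W_j\cong\sO_X$. Thus $E_n$ is a successive extension of $\sO_X$ by itself, and since $\pi^N(X,x)=\{1\}$ gives $H^1(X,\sO_X)=0$ by Lemma~\ref{lem2.4}(2), every such extension splits and $E_n\cong\sO_X^{\oplus r}$ is trivial.

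It remains to deduce that $E$ itself is trivial. If $n=0$ this is immediate. If $n\ge 1$, then $(F^n)^{*}E=E_n$ is trivial, so $G(\langle E_0\rangle,x)$ is local, whence $E=E_0$ is trivial because $\pi^{\rm loc}(X,x)=\{1\}$, once more by Lemma~\ref{lem2.4}(2); this is exactly the argument used at the end of the proof of Lemma~\ref{lem2.7}. In either case $E$ is trivial and Theorem~\ref{thm1.2} holds for such $E$, so in proving Theorem~\ref{thm1.2} we may from now on assume that all the $E_i$ are stable on $X$.

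The step I expect to be the real obstacle is the claim that the Jordan--H\"older factors of $E_n$ stay \emph{Nori} semistable, and not merely $H$-semistable with numerically trivial Chern classes: one needs both that they remain strongly semistable and that they satisfy the second Chern-number condition ${\rm deg}\big({\rm ch}_2\cdot H^{d-2}\big)=0$, which is exactly where Langer's results (Lemma~\ref{lem2.6} together with \cite[Proposition~5.1]{L3}) carry the weight --- without them the $W_j$ could not be fed into the rank induction. A secondary ingredient, needed only to pass from $n\ge 1$ back to $E$, is the standard fact (already invoked in Lemma~\ref{lem2.7}) that a vector bundle trivialised by a power of the Frobenius has local Tannaka automorphism group.
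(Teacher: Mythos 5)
Your reduction is correct, and it reaches the conclusion by a genuinely different route from the paper at the key step. The paper does not work with the full Jordan--H\"older graded of a single $E_n$: it introduces the counting function $s(m)$ of stable Jordan--H\"older components of $E_m$, observes that it is non-decreasing and bounded by $r$, hence constant for $m\ge n_0$, and uses this stabilization to get a Frobenius-compatible sequence of socles $W_m\subset E_m$ with $F^*W_m=W_{m+1}$; strong semistability of $W_m$ is then automatic because its Frobenius pullbacks are again socles, and the rank induction is applied only to the two pieces $W_m$ and $E_m/W_m$. You instead fix the given $n$, take all Jordan--H\"older factors $W_j$ of $E_n$ at once, and prove their strong semistability directly: $(F^s)^*V_j$ and $(F^s)^*V_j/(F^s)^*V_{j-1}$ are equal-slope subsheaves, respectively quotients, inside the semistable $E_{n+s}$ (flatness of Frobenius on the smooth $X$ makes the pulled-back filtration a genuine filtration), hence semistable. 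Combined with Lemma~\ref{lem2.6}(b) and the polarization step giving $\deg\big({\rm ch}_2(W_j)\cdot H^{d-2}\big)=0$, Langer's criterion \cite[Proposition~5.1]{L3} puts each $W_j$ in ${\sf Ns}(X)$, and the rank induction applies. This is arguably cleaner --- it avoids passing to large $m$ and the socle formalism altogether --- while resting on exactly the same external inputs (Langer's results behind Lemmas~\ref{lem2.2} and \ref{lem2.6}, the vanishing $H^1(X,\sO_X)=0$ from Lemma~\ref{lem2.4}(2), and the fact, already used in Lemma~\ref{lem2.7}, that a Frobenius-trivialized bundle has local Tannaka group). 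One cosmetic point: your final "$W_j\cong\sO_X$" should read "$W_j$ is trivial of its rank", and your case division (some $E_n$ not stable) correctly subsumes the paper's separate treatment of the case "$E_n$ trivial", since for $r\ge 2$ a trivial bundle is strictly semistable and the rank-one case is Lemma~2.5.
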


\begin{proof}  Let $n\ge 1$ be a natural number. Then $E_n = (F^n)^*(E_0)$  has to be semistable. Indeed, if there was  a subsheaf $W\subset E_n$ with ${\rm deg}\big(c_1(W)\cdot H^{d-1}\big)>0$, then  for a smooth complete intersection curve $C=\cap_1^{d-1}D_i$, with $D_i$ in some larger power of the linear system $H$, $W|_C$ would be a subsheaf of $E|_C$ with degree $>0$, a contradiction. \\[.1cm]
If $E_n$ is trivial, we conclude as in Lemma \ref{lem2.7} that $E_0$ is trivial as well.\\[.1cm] 
Suppose that $E_n = (F^n)^*(E_0)$ is strictly semistable. 
For $m \geq n$, let $s(m)$ be the number of stable components of the 
Jordan-H\"older components of $E_m$.  It is clear that $s(m)$ is a 
 non-decreasing function of $m$. As the set $\{s(m), m \geq n\}$ is bounded 
above by $r = {\rm rank}( E)$,   $s(m)$ is \emph{constant}
for all $m \geq n_0$ for some $n_0\ge 0$. 
So if $W_m\subset E_m$ is the socle of $E_m$, then $F^*W_m=W_{m+1}$ is the socle of $E_{m+1}$ for any $m\ge n_0$. 
 The sequence $W_n, n\ge n_0$ is a sequence of stable subbundles of $E_n$. By Lemma \ref{lem2.6}, one has
  ${\rm deg}\big( c_i(W_n)\cdot H^{d-i}\big) = 0 \ \forall i \geq 1, \ \forall n \geq n_0$. So by Lemma \ref{lem2.2}, together with \cite[Theorem~5.1]{L3}, we conclude 
$W_n\in {\sf Ns}(X) \ \forall n\ge n_0,$ hence $E_n/W_n\in {\sf Ns}(X) \ \forall n\ge n_0 $
as well. \\[.1cm]
We apply the induction hypothesis to $\{W_n , n \geq n_0\}$ and 
$\{E_n/W_n, n \geq n_0\}$, to assert that $\{W_n,n \geq n_0\}$ and  $\{E_n/W_n, n  \geq n_0\}$ are trivial bundles on $X$. Then $E_n$ are extensions of trivial 
bundles $\forall n \geq n_0$. Applying Lemma  \ref{lem2.4} (2),  we conclude  that 
$E_n$  is trival. But $E_{n} \simeq (F^a)^*(E_{n-a}), \ 0\le a\le n$.  Since $\pi^N(X,x)=\{1\}$, $E_{n-a}$ is trivial as well. This finishes the proof.  
\end{proof}
\noindent 
We continue with the proof of Theorem \ref{thm1.2}. Let $E$ be in   ${\sf Ns}(X)$.
Let $M$ be the moduli space of  $\mu$-stable bundles of degree $0$, which is open  in the moduli of $\chi$-stable torsionfree sheaves with Hilbert polynomial $p_E=p_{\sO_X}$, as constructed  by Langer  in \cite[Theorem~4.1]{L2}.
It is  a quasi-projective 
scheme, of finite type over $k$.\\[.1cm]
 Define $T := \{E_0,E_1, \ldots\}$ as a sublocus of $M_{{\rm red}}$. 
Here we identify a $\mu$-stable bundle $W$ with $\mu(W)=0, p_W=p_{\sO_X}$ of rank $r$ 
with its moduli point $|W| \in M$. Let $N$ be the 
\emph{Zariski closure} of $T$ in $M_{{\rm red}}$. We give $N$ the reduced structure. By Lemma \ref{lem2.7}, the dimension of $N$ is at least $1$. \\[.1cm] 
Consider the decomposition of $N$ into its irreducible components, $A_i,i \in 
I$, and $N_j, j \in J$.  This labeling is chosen such that  $ A_i$ is 
finite $\forall i\in I$, and $T \cap N_j$ is infinite $\forall j \in J$.
 Recall that  $F: X \to  X$ is the 
absolute Frobenius morphism of $X$. Let 
$\xymatrix{ V: M\ar@{.>}[r] & M}$  be the {\it Verschiebung}, which is the rational 
map defined by  $[W] \mapsto [F^*W]$. As $F$ preserves $T$, $V$ maps $N$ into itself
rationally. Moreover,  since the image of this rational map contains $E_n$ for all $n\ge 1$, $V$ is dominant on the Zariski closure of $\{E_1, E_2,\ldots\}$. 
This implies that 
 $V$ maps each $N_i$ into some other $N_j$, \emph{dominantly}.  
We conclude that
 $V$ induces a permutation of the set $J$. Hence some nonzero power of $V$ preserves each $N_j$. \\[.1cm]
We can now argue precisely as \cite[Section~3]{EM}. 
We choose 
 a scheme $S$, smooth, of finite type, geometrically irreducible  over $\F_q$, such 
that
\begin{itemize}
\item[(a)] $X$ has a model smooth projective  $X_S \to S$,
\item[(b)] $M$ has a flat model $M_S \to S$, 
\item[(c)] all the irreducible components $N_j$ of $N$ have a flat model $N_{jS} \to S$,
\item[(d)] $V$ has a model $V_S$ on $M_S$.
\end{itemize}
Recall by Langer's theorem \cite[Theorem~4.1]{L2}, $M_S \to S$ universally corepresents the 
functor of families of stable bundles on the closed fibres of $X_Y \to Y$, thus in particular, for all closed points $s\in Y$, one has $M_S\times_S s=M_s$,  
where $Y$ is any Noetherian scheme over $S$.
Applying \cite[Corollary~3.11]{EM}, we obtain that the specialization $V_S\times_S s$ of $V_S$ over closed points $s\in S$ is the Verschiebung 
of $M_s$. We argue as in the proof of \cite[Theorem~3.14]{EM}
to show that Hrushovski's theorem  \cite[Corollary~1.2]{H}
implies the existence of closed points $u$ in each irreducible component $N_{iS}$ of $N_S$, mapping to a closed points $s\in S$,  such that $V_s^m$
is defined on $u$ and fulfills $F_{X_s}^m(u)=u$. Here $F_{X_s}: X_s\to X_s$ is the absolute Frobenius endomorphism of $X_s$. Applying as in {\it loc. cit.} Lange-Stuhler Theorem  \cite[Satz~1.4]{LS}, we conclude that $N_j=\emptyset$ for all $j\in J$. \\[.1cm]
Thus $T$ has only finitely many components $A_i$ of dimension $0$. This implies that $T$ is finite, thus we find  $ n\in \N$ and $t\in \N\setminus \{0\}$ such that $(F^t)^* E_n\cong E_n$. Applying again Lange-Stuhler Theorem {\it loc. cit.}, this shows that $E_n$ is trivial. This implies by definition that $E_m, m\ge n$ is trivial, and by Lemma  \ref{lem2.4} (2) that $E_m, 0\le m < n$ are trivial as well. This finishes the proof. 

\begin{rmk} \label{rmk2.9}
 Over $k=\bar \F_p$, we do not need Hrushovski's theorem. Indeed, once we know that $N$ is 0-dimensional, we can apply the theorem of Lange-Stuhler directly. 
\end{rmk}

\section{Remarks}
\subsection{} \label{ss3.1} If one knew that the Verschiebung $\xymatrix{ V: M\ar@{.>}[r] & M}$ introduced in the proof of Theorem \ref{thm1.2} was dominant, then we could apply Hrushovski's theorem directly to it and the proof would be much more direct. In fact, this would prove that torsion points are dense in the sense of Theorem \cite[Theorem~3.14]{EM}. We do not know this in general, but do know it in dimension 1 \cite[Theorem~6]{Oss}.
\subsection{} \label{ss3.2} Let $X$ be a smooth projective variety defined over a field $k$. We know that if $k$ is finite, $E$ lies in ${\sf Ns}(X)$ if and only $E$ lies in $\sC^NX)$, that is is essentially finite. So one may be tempted to argue that of a model $E_S$ of $E\in {\sf Ns}(X)$ over $X_S\to S$ has the property that over a dense set of closed points $s\in S$, $E\otimes k(s)\in \sC^N(X\times_S s)$. But this is not true (\cite{Mo}). We are grateful to Holger Brenner for pointing out this reference to us. 
\subsection{} \label{ss3.3} In \cite[Chapter~II,Proposition~8]{N}, Nori shows that if $X$ is projective smooth and geometrically irreducible, then $\pi_1^N(X,x)$ is a birational invariant among the smooth projective models of $k(X)$. Langer in \cite[Lemma~8.3]{L3} shows that blow ups with smooth centers do not affect $\pi^S(X,s)$ and raises the question whether Nori's result extends to $\pi^S(X,x)$. Theorem \ref{thm1.2} shows that this is true under the assumption that $\pi^N(X,x)$ is trivial. 

\bibliographystyle{plain}
\renewcommand\refname{References}

\end{document}